\theoremstyle{plain}
\newtheorem{thm}{Theorem}[section]
\newtheorem{lemma}[thm]{Lemma}
\theoremstyle{definition}
\def\dim{\mathop{\hbox {dim}}\nolimits}
\newcommand{\frg}{\mathfrak{g}}
\newcommand{\frh}{\mathfrak{h}}
\newcommand{\bbC}{\mathbb{C}}
\newcommand{\bbN}{\mathbb{N}}
\newcommand{\bbR}{\mathbb{R}}
\newcommand{\bbZ}{\mathbb{Z}}
\begin{document}

\title[Panyushev conjecture]
{The reverse operator orbits on $\Delta(1)$ and a conjecture of
Panyushev}

%

\author{Chao-Ping Dong}

\address[Dong]
{Institute of Mathematics,  Hunan University,
Changsha 410082, China}
\email{chaoping@hnu.edu.cn}

\abstract{We verify conjecture 5.11 of Panyushev  [\emph{Antichains in weight posets associated with gradings of simple
Lie algebras}, Math Z 281(3):1191--1214, 2015].}
\endabstract

\subjclass[2010]{Primary 17B20, 05Exx}

\keywords{reverse operator; root system; $\bbZ$-gradings of Lie algebra.}

\maketitle


\section{Introduction}

Let $\frg$ be a finite-dimensional simple Lie algebra over $\bbC$. Fix a Cartan subalgebra $\frh$ of $\frg$.
The associated root system is $\Delta=\Delta(\frg, \frh)\subseteq\frh_{\bbR}^*$. Recall that a decomposition
\begin{equation}\label{grading}
\frg=\bigoplus_{i\in \bbZ}\frg(i)
\end{equation}
is  a \emph{$\bbZ$-grading} of $\frg$ if $[\frg(i), \frg(j)]\subseteq \frg(i+j)$ for any $i, j\in\bbZ$.
In particular, in such a case, $\frg(0)$ is a Lie subalgebra of $\frg$. Since each derivation of $\frg$ is inner, there exists $h_0\in\frg(0)$ such that $\frg(i)=\{x\in\frg\mid [h_0, x]=i x\}$. The element $h_0$ is said to be \emph{defining} for the grading \eqref{grading}. Without loss of generality, one may assume that $h_0\in\frh$. Then $\frh\subseteq\frg(0)$. Let $\Delta(i)$ be the set of roots in $\frg(i)$. Then we can
choose a set of positive roots $\Delta(0)^+$ for $\Delta(0)$ such that
$$
\Delta^+ :=\Delta(0)^+\sqcup \Delta(1)\sqcup \Delta(2)\sqcup \cdots
$$
is a set of positive roots of $\Delta(\frg, \frh)$. Let $\Pi$ be the
corresponding simple roots, and put $\Pi(i)=\Delta(i)\cap \Pi$. Note
that the grading \eqref{grading} is fully determined by
$\Pi=\bigsqcup_{i\geq 0} \Pi(i)$. We refer the reader to Ch.~3, \S 3
of \cite{GOV} for generalities on gradings of Lie algebras. Each
$\Delta(i)$, $i\geq 1$, inherits a poset structure from the usual
one of $\Delta^+$. That is, let $\alpha$ and $\beta$ be two roots of
$\Delta(i)$, then $\beta\geq\alpha$ if and only if $\beta-\alpha$ is
a nonnegative integer combination of simple roots.

Recently, Panyushev initiated the study of the rich structure of
$\Delta(1)$ in \cite{P}. In particular,  he raised five
conjectures concerning the $\mathcal{M}$-polynomial,
$\mathcal{N}$-polynomial and the reverse operator of $\Delta(1)$.
Note that Conjectures 5.1, 5.2 and 5.12 there have been solved by
Weng and the author \cite{DW}. The current paper aims
to handle conjecture 5.11 of \cite{P}. Let us prepare more notation.

Recall that a subset $I$ of a finite poset $(P, \leq)$ is a
\emph{lower} (resp., \emph{upper}) \emph{ideal} if $x\leq y$ in $P$
and $y\in I$ (resp. $x\in I$) implies that $x\in I$ (resp. $y\in
I$). We collect the  lower ideals of $P$ as $J(P)$, which is
partially ordered by inclusion. A subset $A$ of $(P, \leq)$ is an
\emph{antichain} if any two elements in $A$ are non-comparable under
$\leq$. We collect the antichains of $P$ as $\mathrm{An}(P)$. For
any $x\in P$, let $I_{\leq x}=\{y\in P\mid y\leq x\}$. Given an
antichain $A$ of $P$, let $I(A)=\bigcup_{a\in A} I_{\leq a}$. The
\emph{reverse operator} $\mathfrak{X}$ is defined by
$\mathfrak{X}(A)=\min (P\setminus I(A))$. Since antichains of $P$
are in bijection with lower (resp. upper) ideals of $P$, the reverse
operator acts on lower (resp. upper) ideals of $P$ as well. Note
that the current $\mathfrak{X}$ is inverse to the reverse operator
$\mathfrak{X}^{\prime}$ in Definition 1 of \cite{P}, see Lemma
\ref{lemma-inverse-reverse-operator}. Thus replacing $\mathfrak{X}^{\prime}$ by $\mathfrak{X}$ does not affect our
forthcoming discussion on orbits.

We say the $\bbZ$-grading \eqref{grading} is \emph{extra-special} if
\begin{equation}\label{extra-special}
\frg=\frg(-2)\oplus \frg(-1) \oplus \frg(0) \oplus \frg(1)
\oplus \frg(2) \mbox{  and  }\dim\frg(2)=1,
\end{equation}
Up to conjugation, any simple Lie algebra $\frg$ has a unique extra-special $\bbZ$-grading. Without loss of generality, we assume that $\Delta(2)=\{\theta\}$ , where $\theta$ is the highest root of $\Delta^+$.
Namely, we may assume that the grading \eqref{extra-special} is defined by the element $\theta^{\vee}$, the dual root of $\theta$. In such a case, we have
\begin{equation}\label{Delta-one}
\Delta(1)=\{\alpha\in\Delta^+\mid (\alpha, \theta^{\vee})=1\}.
\end{equation}
Let $\mathrm{ht}$ be the height function. Recall that $h:=\mathrm{ht}(\theta)+1$ is the \emph{Coxeter number}
of $\Delta$. Let $h^*$ be the \emph{dual Coxeter number }of
$\Delta$. That is, $h^*$ is the height of
$\theta^{\vee}$ in $\Delta^{\vee}$. As noted on p.~1203 of \cite{P},
we have $|\Delta(1)|=2h^*-4$. We call a lower (resp. upper) ideal
$I$ of $\Delta(1)$ \emph{Lagrangian} if $|I|=h^*-2$. Write
$\Delta_l$ (resp. $\Pi_l$) for the set of \emph{all} (resp.
\emph{simple}) \emph{long} roots. In the simply-laced cases, all
roots are assumed to be both long and short. Note that $\theta$ is
always long, while $\theta^{\vee}$ is always short.

Now Conjecture 5.11 of \cite{P} is stated as follows.

\medskip
\noindent \textbf{Panyushev conjecture.}\quad In any extra-special
$\bbZ$-grading of $\frg$, the number of
$\mathfrak{X}_{\Delta(1)}$-orbits equals $|\Pi_l|$, and each orbit
is of size $h-1$. Furthermore, if $h$ is even (which only excludes the case $A_{2k}$ where $h=2k+1$), then each
$\mathfrak{X}_{\Delta(1)}$-orbit contains a unique Lagrangian lower
ideal.

\medskip

Originally, the conjecture is stated in terms of upper ideals and
the reverse operator $\mathfrak{X}^{\prime}$. One agrees that we can
equivalently phrase it using lower ideals and $\mathfrak{X}$. The
main result of the current paper is the following.

\begin{thm}\label{thm-main}
Panyushev conjecture is true.
\end{thm}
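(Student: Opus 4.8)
The plan is to work throughout with the lattice $J(\Delta(1))$ of lower ideals, on which $\mathfrak{X}$ acts. By Lemma \ref{lemma-inverse-reverse-operator} the operators $\mathfrak{X}$ and $\mathfrak{X}'$ are mutually inverse, so they have identical orbits and it suffices to analyse $\mathfrak{X}$. Recalling $|\Delta(1)|=2h^*-4$, so that a Lagrangian ideal is one of size $h^*-2$ (exactly half), the conjecture splits into three assertions: (i) every $\mathfrak{X}$-orbit has cardinality $h-1$; (ii) the number of orbits is $|\Pi_l|$; and (iii) when $h$ is even each orbit contains exactly one Lagrangian ideal. Granting (i), assertion (ii) is equivalent to the single numerical identity $|\mathrm{An}(\Delta(1))|=(h-1)\,|\Pi_l|$, and since the total antichain count $|\mathrm{An}(\Delta(1))|$ is delivered by the $\caM$- and $\caN$-polynomial computations of \cite{P, DW}, the substance of the proof lies in (i) and (iii).

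Two structural features of $\Delta(1)$ organise the argument. First, the map $\sigma(\alpha)=\theta-\alpha$ preserves $\Delta(1)$ by \eqref{Delta-one} and reverses the order, so it is an order-reversing involution; it induces on $J(\Delta(1))$ the involution $\tau(I)=\Delta(1)\setminus\sigma(I)$, which satisfies $|\tau(I)|=(2h^*-4)-|I|$ and the dihedral relation $\tau\,\mathfrak{X}\,\tau=\mathfrak{X}^{-1}$. Second, because a covering relation in the root poset adds a single simple root, the height levels $L_1,\dots,L_{h-2}$ of $\Delta(1)$ are antichains (the top level has height $h-2$, since $\theta\in\Delta(2)$ has height $h-1$), and one checks that $\mathfrak{X}$ carries $\emptyset\mapsto L_1\mapsto L_2\mapsto\cdots\mapsto L_{h-2}\mapsto\emptyset$. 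Thus the orbit of $\emptyset$ already has size exactly $h-1$; this \emph{principal} orbit is the model for all the others.

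The heart of the matter, and the step I expect to be the \textbf{main obstacle}, is (i): that there are no short orbits, equivalently that $\mathfrak{X}$ acts freely with order exactly $h-1$. The level-set computation only controls the principal orbit, and $\Delta(1)$ is not uniformly a product of chains—it is two chains in type $A_n$, a single chain in $C_n$ and $G_2$, a rectangle $[2]\times[2n-3]$ in $B_n$, but a genuinely different $14$-element poset in $F_4$—so one cannot simply quote the known order of rowmotion on minuscule or Gaussian posets. My plan is to produce, type by type, an explicit $\mathfrak{X}$-equivariant encoding of the lower ideals of $\Delta(1)$ under which $\mathfrak{X}$ becomes a cyclic shift of order $h-1$: in the classical types one records an ideal of $\Delta(1)\subseteq\{\pm\epsilon_i\pm\epsilon_j,\dots\}$ as a cyclic binary word (equivalently a lattice path) and verifies directly that applying $\mathfrak{X}$ rotates the word by one step, so that the orbits are exactly the rotation classes and counting them recovers $|\Pi_l|$; the exceptional types $F_4,E_6,E_7,E_8$ are then settled by direct enumeration of the (small) ideal lattices. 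This simultaneously establishes (i) and re-proves (ii).

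For (iii) suppose $h$ is even, so that $h-1$ is odd. Via the dihedral relation $\tau$ permutes the orbits, and on any orbit it preserves it acts as a reflection of a cycle of odd length $h-1$; such a reflection fixes at most one ideal, and any fixed ideal $I_0=\tau(I_0)$ satisfies $|I_0|=(2h^*-4)/2=h^*-2$, hence is Lagrangian. Thus each orbit contains at most one $\tau$-fixed ideal. The explicit encodings of the previous step give the converse identification, that every size-$(h^*-2)$ ideal is $\tau$-fixed—i.e. contains no complementary pair $\{\alpha,\theta-\alpha\}$—so that Lagrangian ideals and $\tau$-fixed ideals coincide, and every orbit therefore contains at most one Lagrangian ideal. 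Finally, the number of Lagrangian ideals is the middle coefficient of the (coefficient-symmetric) $\caN$-polynomial, which equals $|\Pi_l|$ and hence the number of orbits; combined with ``at most one per orbit'' this forces each orbit to contain exactly one Lagrangian ideal. This completes the three assertions and with them the theorem.
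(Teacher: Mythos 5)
Your high-level strategy is sound and contains two attractive structural ideas the paper does not use: the ``principal orbit'' $\emptyset\mapsto L_1\mapsto\cdots\mapsto L_{h-2}\mapsto\emptyset$ of rank-level ideals (which the paper's Lemma \ref{lemma-operator-ideals-CmP} recovers only inside the product posets $[m]\times P$), and the order-reversing involution $\sigma(\alpha)=\theta-\alpha$ with the dihedral relation $\tau\,\mathfrak{X}\,\tau=\mathfrak{X}^{-1}$, which elegantly explains why an odd-length orbit can carry at most one $\tau$-fixed (hence Lagrangian) ideal. The reduction of the orbit count to $|\mathrm{An}(\Delta(1))|=(h-1)|\Pi_l|$ is also exactly the counting step the paper performs (e.g.\ $(n-1)(2n-1)$ ideals for $B_n$, $(n+2)(2n+1)$ for $D_{n+2}$).

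The genuine gap is that the decisive step --- assertion (i), that \emph{every} orbit has size exactly $h-1$ --- is announced but not carried out. You correctly identify it as the main obstacle, but then only promise a ``cyclic binary word'' encoding under which $\mathfrak{X}$ becomes rotation. That encoding is plausible for $A_n$, $C_n$ and for $B_n$ (where $\Delta(1)=[2]\times[2n-3]$ is a product of chains), but you never exhibit it, and for $D_{n+2}$ it is genuinely problematic: there $\Delta(1)\cong[2]\times K_{n-1}$ is \emph{not} a product of chains, the non--full-rank ideals involving $I_n$ and $I_{n'}$ behave differently from the rank-level ones, and the paper has to prove a separate two-case formula (Lemma \ref{lemma-operator-ideals-CmK}, splitting on $j_1<n-1$ versus $j_1=n-1$) and then run distinct computations for the type I orbits $\mathcal{O}(L_i,L_i)$ and the type II orbits $\mathcal{O}(I_n,I_n)$, $\mathcal{O}(I_{n'},I_{n'})$, with the latter depending on the parity of $n$. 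Nothing in your sketch certifies that these type II orbits also have length $2n+1$, nor that the orbits you list are pairwise distinct and exhaust $J(\Delta(1))$. A second, smaller gap sits in (iii): your argument needs ``Lagrangian $\Rightarrow$ $\tau$-fixed'' (equivalently, a half-size lower ideal contains no pair $\{\alpha,\theta-\alpha\}$), which is again deferred to the unexhibited encodings; without it an orbit could a priori contain several Lagrangian ideals none of which is $\tau$-fixed. Both gaps are fillable --- the paper fills the first by the explicit lemma-driven computations in Section 3 and by machine verification in the exceptional types, and your $\tau$-symmetry argument would then close the second via the orbit-by-orbit size lists --- but as written the proposal is an outline whose core verifications are missing.
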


After collecting necessary preliminaries in Section 2, the above
theorem will be proven in Section 3. Moreover, we note that by our
calculations in Section 3, one checks easily that for any
extra-special $1$-standard $\bbZ$-grading of $\frg$, all the
statements of Conjecture 5.3 in \cite{P} hold.

\medskip

\noindent\textbf{Notation.} Let $\bbN =\{0, 1, 2, \dots\}$, and let
$\mathbb{P}=\{1, 2, \dots\}$. For each $n\in\mathbb{P}$, $[n]$
denotes the poset $(\{1, 2, \dots, n\}, \leq)$.

\section{Preliminaries}

Let us collect some preliminary results in this section. Firstly,
let us compare the two reverse operators. Let $(P, \leq)$ be any
finite poset. For any $x\in P$, let $I_{\geq x}=\{y\in P\mid y\geq
x\}$. For any antichain $A$ of $P$, put $I_{+}(A)=\bigcup_{a\in A}
I_{\geq a}$. Recall that in Definition 1 of \cite{P}, the reverse
operator $\mathfrak{X}^{\prime}$ is given by
$\mathfrak{X}^{\prime}(A)=\max (P\setminus I_{+}(A))$.

\begin{lemma}\label{lemma-inverse-reverse-operator}
The operators $\mathfrak{X}$ and  $\mathfrak{X}^{\prime}$ are
inverse to each other.
\end{lemma}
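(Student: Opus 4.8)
The plan is to show directly that both composites $\mathfrak{X}\circ\mathfrak{X}'$ and $\mathfrak{X}'\circ\mathfrak{X}$ act as the identity on the antichains of $P$. The engine is a pair of elementary order-theoretic facts that I would record first: (i) the complement in $P$ of a lower ideal is an upper ideal, and vice versa; and (ii) for any antichain $A$, the set $A$ is precisely the set of maximal elements of the lower ideal $I(A)$ and precisely the set of minimal elements of the upper ideal $I_{+}(A)$ --- equivalently, every lower ideal $L$ satisfies $I(\max L)=L$ and every upper ideal $U$ satisfies $I_{+}(\min U)=U$. Both follow at once from the finiteness of $P$, together with the observation that an antichain cannot have two comparable elements, so each of its members is simultaneously minimal and maximal within the ideal it generates.

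First I would compute $\mathfrak{X}(\mathfrak{X}'(A))$. Set $B:=\mathfrak{X}'(A)=\max(P\setminus I_{+}(A))$. By (i) the set $P\setminus I_{+}(A)$ is a lower ideal, so by (ii) the lower ideal generated by its maximal antichain recovers it, giving $I(B)=P\setminus I_{+}(A)$. Hence $P\setminus I(B)=I_{+}(A)$, and therefore $\mathfrak{X}(B)=\min(P\setminus I(B))=\min(I_{+}(A))$. Applying (ii) once more yields $\min(I_{+}(A))=A$, so $\mathfrak{X}(\mathfrak{X}'(A))=A$.

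The reverse composition is entirely symmetric. Setting $C:=\mathfrak{X}(A)=\min(P\setminus I(A))$, fact (i) shows $P\setminus I(A)$ is an upper ideal and fact (ii) gives $I_{+}(C)=P\setminus I(A)$, whence $P\setminus I_{+}(C)=I(A)$ and $\mathfrak{X}'(C)=\max(I(A))=A$. Thus $\mathfrak{X}'\circ\mathfrak{X}=\mathrm{id}$ as well, and the two operators are mutually inverse.

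There is no genuinely hard step here; the whole content is the bookkeeping around complementation. The one point deserving care is to keep track of \emph{which} kind of set $\min$ and $\max$ are applied to --- an upper ideal in the definition of $\mathfrak{X}$, a lower ideal in the definition of $\mathfrak{X}'$ --- so that each output really is an antichain and the recovery identities in (ii) genuinely apply. Once (i) and (ii) are isolated, the two displayed computations are immediate, so the main obstacle is purely organizational: phrasing the antichain--ideal correspondence cleanly enough that both directions drop out of the same two-line argument.
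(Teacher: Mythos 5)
Your proof is correct and follows essentially the same route as the paper: the paper's one-line argument rests on exactly the identities $I_{+}(\min(P\setminus I(A)))=P\setminus I(A)$ and $I(\max(P\setminus I_{+}(A)))=P\setminus I_{+}(A)$, which are your facts (i) and (ii) applied to the complements. Your version merely makes the antichain--ideal correspondence and the two composite computations explicit.
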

\begin{proof}
Take any antichain $A$ of $P$, note that
$$I_{+}(\min(P\setminus
I(A)))=P\setminus I(A)\mbox{ and } I(\max(P\setminus
I_{+}(A)))=P\setminus I_{+}(A).
$$
Then the lemma follows.
\end{proof}

Let $(P_i,\leq), i=1, 2$ be two finite posets. One can define a
poset structure on $P_1\times P_2$ by setting $(u_1, v_1)\leq (u_2,
v_2)$ if and only if $u_1\leq u_2$ in $P_1$ and $v_1\leq v_2$ in
$P_2$. We simply denote the resulting poset by $P_1 \times P_2$. The
following well-known lemma describes the lower ideals of
$[m]\times P$.

\begin{lemma}\label{lemma-ideals-CnP}
Let $P$ be a finite poset. Let $I$ be a subset of $[m]\times P$. For
$1\leq i\leq m$, denote $I_i=\{a\in P\mid (i, a)\in I\}$. Then $I$
is a lower ideal of $[m]\times P$ if and only if each $I_i$ is a
lower ideal of $P$, and $I_m\subseteq I_{m-1}\subseteq \cdots
\subseteq I_{1}$.
\end{lemma}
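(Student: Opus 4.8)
The plan is to characterize lower ideals of $[m]\times P$ by slicing the product into its $m$ horizontal copies of $P$ and tracking how the ideal restricts to each copy. The final statement is Lemma~\ref{lemma-ideals-CnP}, which asserts that a subset $I\subseteq [m]\times P$ is a lower ideal if and only if each slice $I_i=\{a\in P\mid (i,a)\in I\}$ is a lower ideal of $P$ and the slices are nested as $I_m\subseteq I_{m-1}\subseteq\cdots\subseteq I_1$. I would prove both implications directly from the definition of the product order, in which $(i,a)\leq(j,b)$ means $i\leq j$ in $[m]$ and $a\leq b$ in $P$.

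For the forward direction, I would assume $I$ is a lower ideal of $[m]\times P$ and deduce the two structural conditions. To see that each $I_i$ is a lower ideal of $P$, I take $a\in I_i$ and $a'\leq a$ in $P$; then $(i,a')\leq(i,a)$ in the product order since $i\leq i$ trivially, so $(i,a')\in I$ because $I$ is a lower ideal, whence $a'\in I_i$. To see the nesting, I take $a\in I_i$ for some $i\geq 2$; since $i-1\leq i$ in $[m]$ and $a\leq a$ in $P$, we have $(i-1,a)\leq(i,a)$, so $(i-1,a)\in I$, giving $a\in I_{i-1}$. Thus $I_i\subseteq I_{i-1}$ for all $i\geq 2$, which chains up to the required inclusions.

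For the reverse direction, I would assume each $I_i$ is a lower ideal of $P$ and that $I_m\subseteq\cdots\subseteq I_1$, then verify $I$ is a lower ideal. Take $(j,b)\in I$ and $(i,a)\leq(j,b)$, so $i\leq j$ and $a\leq b$. From $(j,b)\in I$ we get $b\in I_j$; since $I_j$ is a lower ideal and $a\leq b$, we obtain $a\in I_j$. The nesting gives $I_j\subseteq I_i$ because $i\leq j$, so $a\in I_i$, i.e.\ $(i,a)\in I$, as required.

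This argument is entirely a routine unwinding of the product order against the two defining closure properties, so I do not anticipate a genuine obstacle; the only care needed is to keep the two conditions (each slice being an ideal, versus the slices being nested) cleanly separated, since the former handles comparabilities within a fixed level and the latter handles comparabilities across levels. Because $[m]$ is a chain, these two families of comparabilities exhaust all relations in the product, which is exactly why the stated conditions are both necessary and sufficient.
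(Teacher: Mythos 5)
Your proof is correct and complete; both directions are a routine unwinding of the product order, exactly as the argument should go. The paper states this lemma as well-known and omits any proof, so your write-up simply supplies the standard verification that the paper implicitly relies on.
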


In this section, by a \emph{finite graded poset} we always mean a
finite poset $P$ with a rank function $r$ from  $P$ to the positive
integers $\mathbb{P}$ such that all the minimal elements  have rank
$1$, and $r(x)=r(y)+1$ if $x$ covers $y$. In such a case, let $P_i$
be the set of elements in $P$ with rank $i$. The sets $P_i$ are said
to be the \emph{rank levels} of $P$. Suppose that
$P=\bigsqcup_{j=1}^{d} P_j$. Let $P_0$ be the empty set $\emptyset$.
Put $L_i=\bigsqcup_{j=1}^{i} P_j$ for $1\leq j\leq d$, and let $L_0$ be the empty set.
We call those $L_i$ \emph{rank level lower ideals}.

Let $\mathfrak{X}$  be the reverse operator on $[m]\times P$. In
view of Lemma \ref{lemma-ideals-CnP}, we denote by $(I_1, \cdots,
I_m)$ a general lower ideal of $[m]\times P$, where each $I_i\in
J(P)$ and $I_m\subseteq  \cdots \subseteq I_{1}$. We say that the
lower ideal $(I_1, \cdots, I_m)$ is \emph{full rank} if each $I_i$
is a rank level lower ideal of $P$. Let $\mathcal{O}(I_1, \cdots,
I_m)$ be the $\mathfrak{X}_{[m]\times P}$-orbit of $(I_1, \cdots,
I_m)$. The following lemma will be helpful in determining
$\mathfrak{X}_{[m]\times P}$-orbits  consisting of rank level lower
ideals.

\begin{lemma}\label{lemma-operator-ideals-CmP}
Keep the notation as above.  Then
for any $n_0\in \bbN$, $n_i\in\mathbb{P}$ ($1\leq i\leq s$) such that $\sum_{i=0}^{s} n_i =m$, we have
\begin{equation}\label{rank-level}
\mathfrak{X}_{[m]\times P}(L_d^{n_0}, L_{i_1}^{n_1}, \cdots, L_{i_s}^{n_s})=
(L_{i_1+1}^{n_0+1}, L_{i_2+1}^{n_1}, \cdots, L_{i_s+1}^{n_{s-1}}, L_0^{n_s-1}),
\end{equation}
where $0\leq i_s<\cdots <i_1<d$, $L_d^{n_0}$ denotes $n_0$ copies of $L_d$ and so on.

\end{lemma}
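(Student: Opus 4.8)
The plan is to work throughout with the realization of $\mathfrak{X}$ as an operator on lower ideals. Writing $Q=[m]\times P$ and recalling that the antichain attached to a lower ideal $I$ is $\max(I)$ while $I(\max(I))=I$, one sees that on lower ideals $\mathfrak{X}$ is given by $\mathfrak{X}(I)=I(\min(Q\setminus I))$, the lower ideal generated by the minimal elements of the complement. So I would prove \eqref{rank-level} by (i) computing the antichain $\min(Q\setminus I)$ for the full rank ideal $I=(L_d^{n_0},L_{i_1}^{n_1},\dots,L_{i_s}^{n_s})$, and (ii) generating the lower ideal it spans and matching it column-by-column against the right-hand side via Lemma \ref{lemma-ideals-CnP}.

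For step (i), the first ingredient is the elementary fact that in a finite graded poset $\min(P\setminus L_k)=P_{k+1}$: any rank $k+1$ element has all its proper predecessors of rank $\le k$ and hence is minimal in the complement of $L_k$, while any element of rank $>k+1$ covers some element of rank $>k$ and so is not. Next I would translate minimality in $Q$ into column data. Using Lemma \ref{lemma-ideals-CnP} and the nesting $I_m\subseteq\cdots\subseteq I_1$, an element $(i,a)$ is minimal in $Q\setminus I$ exactly when $a\in\min(P\setminus I_i)$ and, for $i>1$, also $a\in I_{i-1}$ (the monotonicity makes the condition on all lower columns collapse to the single condition on column $i-1$). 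Specializing to the full rank ideal, column $i$ contributes $\{i\}\times(P_{k_i+1}\cap L_{k_{i-1}})$, which is $\{i\}\times P_{k_i+1}$ precisely at the columns where the rank level strictly drops and is empty otherwise. Thus the only contributing columns are the block-initial columns $c_{t-1}:=1+\sum_{j=0}^{t-1}n_j$, and there the contribution is $\{c_{t-1}\}\times P_{i_t+1}$; in particular the $L_d$-block contributes nothing, which is what forces the length of the top block on the right to be $n_0+1$ rather than $n_0$.

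For step (ii) I would use that in a product poset $I(\{c\}\times S)=[c]\times\langle S\rangle$, so that $\mathfrak{X}(I)=\bigcup_{t=1}^s[c_{t-1}]\times\langle P_{i_t+1}\rangle$; reading off column $i$ then gives $\mathfrak{X}(I)_i=\bigcup_{t'\ge t}\langle P_{i_{t'}+1}\rangle$ whenever $c_{t-2}<i\le c_{t-1}$ (with $c_{-1}:=0$), and $\emptyset$ for $i>c_{s-1}$. Comparing with the grouping on the right-hand side, the predicted value in the $t$-th block is $L_{i_t+1}$ and the value in the final $n_s-1$ columns is $L_0=\emptyset$, so matching the column ranges reduces to the single identity $\bigcup_{t'\ge t}\langle P_{i_{t'}+1}\rangle=L_{i_t+1}$.

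That last identity is where I expect the real work to sit, and it is the step that genuinely uses the geometry of the graded poset rather than bookkeeping: since $i_t>i_{t+1}>\cdots\ge 0$, the inclusion $\subseteq$ is automatic from $\langle P_{k}\rangle\subseteq L_k$, but the reverse inclusion requires knowing that the lower ideal generated by a rank level recovers the entire truncation, i.e. $\langle P_k\rangle=L_k$. This holds once every element of rank below $d$ lies under an element of the next rank, so that following upper covers lets one dominate any element of rank $\le k$ by one of rank exactly $k$; establishing (or invoking) this property for the posets $P$ at hand, together with checking the two boundary conventions ($n_0=0$, where column $1$ is governed by the $\min(P\setminus L_{i_1})=P_{i_1+1}$ rule directly, and $n_s=1$, where the trailing $L_0$-block is vacuous), is the crux. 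Once it is in place, the column-by-column comparison with Lemma \ref{lemma-ideals-CnP} finishes the proof.
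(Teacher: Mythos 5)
Your proof is correct and takes essentially the same route as the paper, whose entire argument is the single sentence that analyzing the minimal elements of the complement leads to the formula; you have simply supplied the details of that analysis. Your identification of the identity ``the lower ideal generated by the rank level $P_k$ equals $L_k$'' as the crux is a genuine point: it can fail for an arbitrary finite graded poset in the paper's sense (e.g.\ if some element of rank below $d$ has no upper cover), but it holds trivially for the chain $[2n-3]$ and for $K_{n-1}$, the only posets to which the lemma is actually applied.
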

\begin{proof}
Note that under the above assumptions, $(L_d^{n_0}, L_{i_1}^{n_1}, \cdots, L_{i_s}^{n_s})$ is a lower ideal of $[m]\times P$ in view of Lemma \ref{lemma-ideals-CnP}. Then analyzing the minimal elements of $([m]\times P)\setminus (L_d^{n_0}, L_{i_1}^{n_1}, \cdots, L_{i_s}^{n_s})$ leads one to \eqref{rank-level}.

\end{proof}

\begin{lemma}\label{lemma-operator-types}
Let $(I_1, \cdots, I_m)$ be an arbitrary lower ideal of $[m]\times P$.
Then $(I_1, \cdots, I_m)$ is full rank if and only if each lower ideal
in the orbit $\mathcal{O}(I_1, \cdots, I_m)$ is full rank.
\end{lemma}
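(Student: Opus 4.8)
The plan is to separate the two implications, with essentially all of the content concentrated in the forward direction; the reverse direction is a one-line observation. First I would dispose of the backward implication: since $(I_1,\cdots,I_m)$ is a member of its own orbit $\mathcal{O}(I_1,\cdots,I_m)$, the hypothesis that every lower ideal in the orbit is full rank trivially forces $(I_1,\cdots,I_m)$ itself to be full rank.

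For the forward implication, the guiding idea is that Lemma \ref{lemma-operator-ideals-CmP} already records exactly how $\mathfrak{X}_{[m]\times P}$ acts on full rank ideals, so it suffices to recognize every full rank ideal as an admissible input of that lemma. Concretely, if $(I_1,\cdots,I_m)$ is full rank then each $I_i$ is a rank level lower ideal $L_{j_i}$, and the nesting $I_m\subseteq\cdots\subseteq I_1$ is equivalent to $d\geq j_1\geq\cdots\geq j_m\geq 0$. Grouping the maximal runs of equal entries of this non-increasing sequence into blocks, and singling out whether the largest index equals $d$, I can rewrite $(I_1,\cdots,I_m)$ in exactly the shape $(L_d^{n_0},L_{i_1}^{n_1},\cdots,L_{i_s}^{n_s})$ with $0\leq i_s<\cdots<i_1<d$, $n_0\in\bbN$ and $n_i\in\mathbb{P}$, where $n_0=0$ is used precisely when the top block is not $L_d$.

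Now applying \eqref{rank-level} shows that $\mathfrak{X}_{[m]\times P}(I_1,\cdots,I_m)$ equals $(L_{i_1+1}^{n_0+1},\cdots,L_0^{n_s-1})$, whose every component is again a rank level lower ideal; hence $\mathfrak{X}_{[m]\times P}$ sends full rank ideals to full rank ideals. By induction, every forward iterate $\mathfrak{X}_{[m]\times P}^{k}(I_1,\cdots,I_m)$, $k\geq 0$, is full rank. To pass from forward iterates to the whole orbit I would invoke finiteness: by Lemma \ref{lemma-inverse-reverse-operator} the operator $\mathfrak{X}_{[m]\times P}$ is a bijection of the finite set $J([m]\times P)$, so each orbit is a single finite cycle and thus coincides with $\{\mathfrak{X}_{[m]\times P}^{k}(I_1,\cdots,I_m)\mid k\geq 0\}$. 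All of these being full rank, every lower ideal in $\mathcal{O}(I_1,\cdots,I_m)$ is full rank, as desired.

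I expect the only real obstacle to be the bookkeeping in the second paragraph, namely verifying that an arbitrary full rank tuple genuinely fits the indexed template of \eqref{rank-level}, with the degenerate cases $n_0=0$ and the trailing empty block $L_0$ handled correctly. This is routine but is the one place where care is required; everything else follows formally from the two quoted lemmas together with the finiteness of the poset.
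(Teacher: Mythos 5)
Your argument is correct and follows the same route as the paper, which simply invokes Lemma \ref{lemma-operator-ideals-CmP}: the formula \eqref{rank-level} shows $\mathfrak{X}_{[m]\times P}$ preserves full-rank ideals, and bijectivity (Lemma \ref{lemma-inverse-reverse-operator}) plus finiteness makes each orbit a single cycle of forward iterates. Your write-up merely makes explicit the bookkeeping (block decomposition, the degenerate cases $n_0=0$ and $(L_d^m)\mapsto(L_0^m)$) that the paper leaves to the reader.
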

\begin{proof}
Use Lemma \ref{lemma-operator-ideals-CmP}.
\end{proof}

The above lemma tells us that there are two types of $\mathfrak{X}$-orbits: in the first type each lower ideal is full rank, while in the second type each lower ideal is not. We call them \emph{type I} and \emph{type II}, respectively.

For any $n\geq 2$, let $K_{n-1}=[n-1]\oplus([1]\sqcup [1])\oplus [n-1]$ (the ordinal sum, see
p.~246 of \cite{St}).  We label the elements of $K_{n-1}$ by $1$, $2$, $\cdots$,
$n-1$, $n$, $n^{\prime}$, $n+1$, $\cdots$, $2n-2$, $2n-1$. Figure 1
illustrates the labeling for the Hasse diagram of $K_3$. Note that $L_i$ ($0\leq i\leq 2n-1$) are all the full rank lower ideals. For instance, we have $L_{n}=\{1, 2, \cdots, n, n^{\prime}\}$.
Moreover, we put $I_{n}=\{1,  \cdots, n-1,
n\}$ and $I_{n^{\prime}}=\{1,  \cdots, n-1, n^{\prime}\}$. The following lemma will be helpful in analyzing the  $\mathfrak{X}_{[m]\times K_{n-1}}$-orbits of type II.

\begin{lemma}\label{lemma-operator-ideals-CmK}
Fix $n_0\in \bbN$, $n_i\in\mathbb{P}$ ($1\leq i\leq s$),
$m_j\in\mathbb{P}$ ($0\leq j\leq t$) such that $\sum_{i=0}^{s} n_i +
\sum_{j=0}^{t} m_j=m$. Take any $0\leq j_t< \cdots<j_1<n\leq
i_s<\cdots <i_1<2n-1$, we have
\begin{align*}
\mathfrak{X}_{[m]\times K_{n-1}}&(L_{2n-1}^{n_0}, L_{i_1}^{n_1}, \cdots, L_{i_s}^{n_s}, I_n^{m_0}, L_{j_1}^{m_1}, \cdots, L_{j_t}^{m_t})=\\
&\begin{cases}
( L_{i_1+1}^{n_0+1}, L_{i_2+1}^{n_1}, \cdots, L_{i_s+1}^{n_{s-1}}, I_{n^{\prime}}^{n_s}, L_{j_1+1}^{m_0},
L_{j_2+1}^{m_1}, \cdots, L_{j_t+1}^{m_{t-1}}, L_0^{m_t-1} ) & \mbox { if } j_1 < n-1;\\
( L_{i_1+1}^{n_0+1}, L_{i_2+1}^{n_1}, \cdots, L_{i_s+1}^{n_{s-1}}, L_{n}^{n_s}, I_n^{m_0},
\, \, \, \, L_{j_2+1}^{m_1}, \cdots, L_{j_t+1}^{m_{t-1}}, L_0^{m_t-1} )& \mbox { if } j_1 = n-1.
\end{cases}
\end{align*}
\end{lemma}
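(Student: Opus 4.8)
The plan is to follow the template of Lemma~\ref{lemma-operator-ideals-CmP}: first use Lemma~\ref{lemma-ideals-CnP} to check that the displayed sequence really is a lower ideal $I$ of $[m]\times K_{n-1}$—the required chain of inclusions $L_{2n-1}\supseteq L_{i_1}\supseteq\cdots\supseteq L_{i_s}\supseteq I_n\supseteq L_{j_1}\supseteq\cdots\supseteq L_{j_t}$ is immediate from $n\le i_s<\cdots<i_1<2n-1$ and $0\le j_t<\cdots<j_1<n$—and then compute $\mathfrak{X}(I)=I\bigl(\min(([m]\times K_{n-1})\setminus I)\bigr)$ by locating the minimal elements of the complement. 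Writing $I=(I_1,\dots,I_m)$, a pair $(k,a)$ is minimal in the complement exactly when $a\in\min(K_{n-1}\setminus I_k)$ and either $k=1$ or $a\in I_{k-1}$. Since $I_k=I_{k-1}$ in the interior of every constant block, the only contributions sit in the first column of each block, so the whole computation reduces to reading off one set of minimal elements per block transition.

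Each transition is governed by the simple shape of the complements in $K_{n-1}$. For an upper level $L_i$ with $i\ge n$ the complement is the chain above $i$, whence $\min(K_{n-1}\setminus L_i)=\{i+1\}$; for a lower level $L_j$ with $j<n-1$ one gets $\min(K_{n-1}\setminus L_j)=\{j+1\}$; in both cases the single minimal element lies in the preceding (larger) ideal, and its principal ideal in $K_{n-1}$ is $L_{i+1}$ resp.\ $L_{j+1}$. The two new phenomena occur at the split: $\min(K_{n-1}\setminus I_n)=\{n'\}$ with principal ideal $I_{\leq n'}=I_{n'}$, and $\min(K_{n-1}\setminus L_{n-1})=\{n,n'\}$, of which the constraint $a\in I_{k-1}=I_n$ retains only $n$, with principal ideal $I_{\leq n}=I_n$. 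This is precisely where the dichotomy is born: at the transition $I_n\to L_{j_1}$ the contributed minimal element is $\{j_1+1\}$ (generating $L_{j_1+1}$) when $j_1<n-1$, but is $\{n\}$ (generating $I_n$) when $j_1=n-1$.

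Having catalogued the column-wise minimal sets $M_{k'}$, I would reconstruct the answer through $\mathfrak{X}(I)_k=\bigcup_{k'\ge k}I(M_{k'})$: the multiplicity with which each generated ideal appears equals the gap between consecutive transition columns, and this reproduces the index shifts of \eqref{rank-level} verbatim, including the top level being absorbed into the leading multiplicity $n_0+1$ and the trailing block $L_0^{m_t-1}$. When $j_1<n-1$ the generated ideals form a single descending chain $L_{i_1+1}\supseteq\cdots\supseteq L_{i_s+1}\supseteq I_{n'}\supseteq L_{j_1+1}\supseteq\cdots\supseteq L_{j_t+1}$, so every partial union collapses to its largest member and the first branch of the formula follows term by term.

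The crux, and the only place the argument departs from Lemma~\ref{lemma-operator-ideals-CmP}, is the case $j_1=n-1$: now the ideal $I_{n'}$ produced at the transition $L_{i_s}\to I_n$ and the ideal $I_n$ produced at the transition $I_n\to L_{n-1}$ are \emph{incomparable}, so the unions $\bigcup_{k'\ge k}I(M_{k'})$ no longer collapse to a single term. On the $n_s$ columns that the chain case would have filled with $I_{n'}$ the union instead becomes $I_{n'}\cup I_n=\{1,\dots,n-1,n,n'\}=L_n$, while on the next $m_0$ columns the surviving union is $I_n$ itself; this is exactly what replaces $I_{n'}^{n_s}$ by $L_n^{n_s}$ and $L_{j_1+1}^{m_0}$ by $I_n^{m_0}$, leaving every other block as in the chain case. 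Checking that this merge touches precisely these $n_s+m_0$ columns and no others, together with the boundary counts, is the delicate bookkeeping; the rest is the column-counting already used for \eqref{rank-level}.
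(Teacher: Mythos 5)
Your proposal is correct and follows exactly the route the paper takes (its proof is the one-line instruction ``analyze the minimal elements of the complement''): you locate the column-wise minimal elements of $([m]\times K_{n-1})\setminus I$ and regenerate the lower ideal, correctly isolating the one genuinely new point, namely that $\min(K_{n-1}\setminus L_{n-1})=\{n,n'\}$ forces the merge $I_{n'}\cup I_n=L_n$ on the $n_s$ columns and leaves $I_n$ on the next $m_0$ columns when $j_1=n-1$. The details you supply (the characterization of minimal elements in a product with a chain, the single-element minima away from the split, and the block-length bookkeeping) all check out.
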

\begin{proof}
Analyzing the minimal elements of $$([m]\times K_{n-1})\setminus (L_{2n-1}^{n_0}, L_{i_1}^{n_1}, \cdots, L_{i_s}^{n_s}, I_n^{m_0}, L_{j_1}^{m_1}, \cdots, L_{j_t}^{m_t})$$ leads one to the desired expression.
\end{proof}

\begin{figure}[]
\centering \scalebox{0.4}{\includegraphics{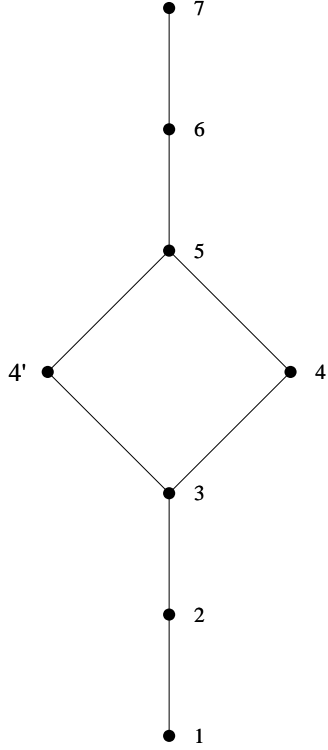}}
\caption{The labeled Hasse diagram of $K_3$}
\end{figure}

\section{Panyushev conjecture}

This section is devoted to proving Theorem \ref{thm-main}.

\noindent \emph{Proof of Theorem \ref{thm-main}.} Note that when
$\frg$ is $A_n$,  the extra-special $\Delta(1)\cong [n-1]\sqcup
[n-1]$; when $\frg$ is $C_n$, the extra-special $\Delta(1)\cong
[2n-2]$. One can verify Theorem \ref{thm-main} for these two cases
without much effort. We omit the details.

For $\frg=B_n$,  the extra-special $\Delta(1)= [2]\times [2n-3]$.
Now $|\Pi_{l}|=n-1$, $h-1=2n-1$, and $h^*-2=2n-3$. As in Section 2,
let $L_i$ ($0\leq i\leq 2n-3$) be the rank level lower ideals. For
simplicity, we simply denote $\mathfrak{X}_{[2]\times [2n-3]}$ by
$\mathfrak{X}$. For any $1\leq i\leq n-2$, let us analyze the type I
$\mathfrak{X}$-orbit $\mathcal{O}(L_i, L_i)$ via the aid of Lemma
\ref{lemma-operator-ideals-CmP}:
\begin{align*}
\mathfrak{X}(L_i, L_i)&=(L_{i+1}, L_0),\\
\mathfrak{X}^{2n-4-i}(L_{i+1}, L_0)&=(L_{2n-3}, L_{2n-4-i}),\\
\mathfrak{X}(L_{2n-3}, L_{2n-4-i})&=(L_{2n-3-i}, L_{2n-3-i}),\\
\mathfrak{X}(L_{2n-3-i}, L_{2n-3-i})&=(L_{2n-2-i}, L_{0}),\\
\mathfrak{X}^{i-1}(L_{2n-2-i}, L_{0})&=(L_{2n-3}, L_{i-1}),\\
\mathfrak{X}(L_{2n-3}, L_{i-1})&=(L_{i}, L_{i}).
\end{align*}
Thus $\mathcal{O}(L_i, L_i)$ consists of $2n-1$ elements. Moreover,
in this orbit,  $(L_{2n-2-\frac{i+1}{2}}, L_{\frac{i-1}{2}})$ is the
unique ideal with size $2n$ when $i$ is odd,  $(L_{n+\frac{i}{2}-1},
L_{n-\frac{i}{2}-2})$ is the unique ideal with size $2n$ when $i$ is
even. Similarly, the orbit $\mathcal{O}(L_0, L_0)$ consists of
$2n-1$ elements and contains a unique ideal with size $2n$:
$(L_{n-1}, L_{n-2})$. Since there are $(n-1)(2n-1)$ lower ideals in
$[2]\times [2n-3]$ by Lemma \ref{lemma-ideals-CnP}, one sees that
all the $\mathfrak{X}$-orbits have been exhausted, and Theorem
\ref{thm-main} holds for $B_{n}$.

Let us consider $D_{n+2}$, where the extra-special $\Delta(1)\cong
[2]\times K_{n-1}$. We adopt the notation as in Section 2. For simplicity,
we write $\mathfrak{X}_{[2]\times K_{n-1}}$ by $\mathfrak{X}$.  We propose
the following.

\textbf{Claim.} $\mathcal{O}(L_i, L_i)$, $0\leq i\leq n-1$,
 $\mathcal{O}(I_n, I_n)$, and $\mathcal{O}(I_{n^{\prime}},
I_{n^{\prime}})$ exhausts the orbits of $\mathfrak{X}$ on $[2]\times
K_{n-1}$. Moreover, each orbit has size $2n+1$ and contains a unique
lower ideal with size $2n$.

Indeed, firstly, for any $0\leq i\leq n-1$, observe that by Lemma
\ref{lemma-operator-ideals-CmP}, we have
\begin{align*}
\mathfrak{X}(L_i, L_i)&=(L_{i+1}, I_0),\\
\mathfrak{X}^{2n-i-2}(L_{i+1}, L_0)&=(L_{2n-1}, L_{2n-i-2}),\\
\mathfrak{X}(L_{2n-1}, L_{2n-i-2})&=(L_{2n-i-1}, L_{2n-i-1}),\\
\mathfrak{X}(L_{2n-i-1}, L_{2n-i-1})&=(L_{2n-i}, L_{0}),\\
\mathfrak{X}^{i-1}(L_{2n-i}, L_{0})&=(L_{2n-1}, L_{i-1}),\\
\mathfrak{X}(L_{2n-1}, L_{i-1})&=(L_{i}, L_{i}).
\end{align*}
Thus the type I orbit $\mathcal{O}(L_i, L_i)$ consists of $2n+1$ elements. Moreover,
in this orbit,  $(L_{2n-i+\frac{i-1}{2}}, L_{\frac{i-1}{2}})$ is the
unique ideal with size $2n$ when $i$ is odd,  $(L_{n+\frac{i}{2}},
L_{n-\frac{i}{2}-1})$ is the unique ideal with size $2n$ when $i>0$
is even, while $(L_{n}, L_{n-1})$ is the unique ideal
with size $2n$ when $i=0$.

Secondly, assume that $n$ is even and let us analyze the orbit
$\mathcal{O}(I_n, I_n)$. Indeed, by Lemma \ref{lemma-operator-ideals-CmK}, we have
\begin{align*}
\mathfrak{X}(I_n, I_n)&=(I_{n^{\prime}}, L_0),\\
\mathfrak{X}^{n-1}(I_{n^{\prime}}, L_0)&=(I_{n}, L_{n-1}),\\
\mathfrak{X}(I_{n}, L_{n-1})&=(L_{n}, I_{n}),\\
\mathfrak{X}^{n-1}(L_{n}, I_{n})&=(L_{2n-1}, I_{n^{\prime}}),\\
\mathfrak{X}(L_{2n-1}, I_{n^{\prime}})&=(I_{n}, I_{n}).
\end{align*}
Thus the type II orbit $\mathcal{O}(I_n, I_n)$ consists of $2n+1$ elements. Moreover,
in this orbit,  $(I_n, I_n)$ is the unique ideal with size $2n$. The
analysis of the orbit $\mathcal{O}(I_{n^{\prime}}, I_{n^{\prime}})$
is entirely similar.

Finally, assume that $n$ is odd and let us analyze the orbit
$\mathcal{O}(I_n, I_n)$. Indeed,  by Lemma \ref{lemma-operator-ideals-CmK},  we have
\begin{align*}
\mathfrak{X}(I_n, I_n)&=(I_{n^{\prime}}, L_0),\\
\mathfrak{X}^{n-1}(I_{n^{\prime}}, L_0)&=(I_{n^{\prime}}, L_{n-1}),\\
\mathfrak{X}(I_{n^{\prime}}, L_{n-1})&=(L_{n}, I_{n^{\prime}}),\\
\mathfrak{X}^{n-1}(L_{n}, I_{n^{\prime}})&=(L_{2n-1}, I_{n^{\prime}}),\\
\mathfrak{X}(L_{2n-1}, I_{n^{\prime}})&=(I_{n}, I_{n}).
\end{align*}
Thus the type II  orbit $\mathcal{O}(I_n, I_n)$ consists of $2n+1$ elements. Moreover,
in this orbit,  $(I_n, I_n)$ is the unique ideal with size $2n$. The
analysis of the orbit $\mathcal{O}(I_{n^{\prime}}, I_{n^{\prime}})$
is entirely similar.

To sum up, we have verified the claim since there are $(n+2)(2n+1)$
lower ideals in $[2]\times K_{n-1}$ by Lemma \ref{lemma-ideals-CnP}.
Note that $|\Pi_{l}|=n+2$, $h=h^*=2n+2$ for $\frg=D_{n+2}$, one sees that Theorem
\ref{thm-main} holds for $D_{n+2}$.

Theorem \ref{thm-main} has been verified for all exceptional Lie
algebras using \texttt{Mathematica}. We only present the details for
$E_6$, where $\Delta(1)=[\alpha_2]$, and the Dynkin diagram is as follows.

\begin{figure}[H]
\centering \scalebox{0.5}{\includegraphics{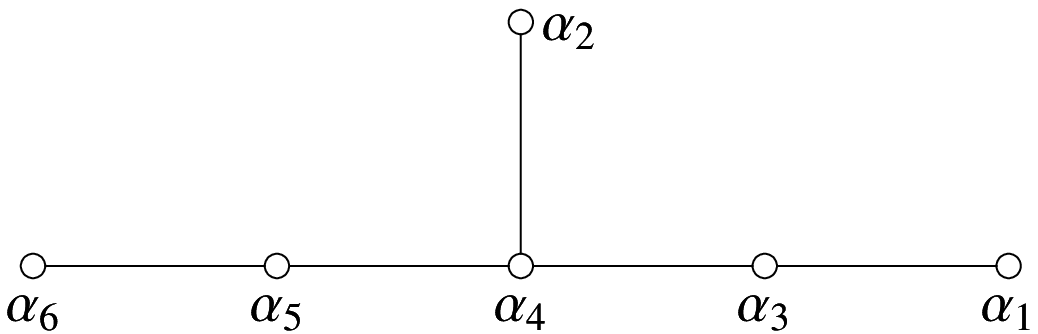}}
\end{figure}

Note that $|\Pi_l|=6$,
$h-1=11$, $h^*-2=10$. On the other hand, $\mathfrak{X}$ has six
orbits on $\Delta(1)$, each has $11$ elements. Moreover, the size of
the lower ideals in each orbit is distributed as follows:
\begin{itemize}
\item[$\bullet$] $0, 1, 2, 4, 7, \textbf{10}, 13, 16, 18, 19, 20$;

\item[$\bullet$] $3, 4, 5, 6, 9, \textbf{10}, 11, 14, 15, 16, 17$;

\item[$\bullet$] $3, 4, 5, 6, 9, \textbf{10}, 11, 14, 15, 16, 17$;

\item[$\bullet$] $7, 7, 8, 8, 9, \textbf{10}, 11, 12, 12, 13, 13$;

\item[$\bullet$] $5, 6, 6, 8, 9, \textbf{10}, 11, 12, 14, 14, 15$;

\item[$\bullet$] $7, 7, 8, 8, 9, \textbf{10}, 11, 12, 12, 13, 13$.
\end{itemize}
One sees that each orbit has a unique Lagrangian lower ideal.

This finishes the proof of Theorem \ref{thm-main}. \hfill\qed

\medskip

\centerline{\scshape Acknowledgements} The research is supported by
the National Natural Science Foundation of China (grant no.
11571097) and the Fundamental Research Funds for the Central
Universities.

\end{document}